\DeclareMathOperator*{\argmax}{arg\,max}
\renewcommand{\natural}{{\mathbb{N}}}
\newcommand{\integernonnegative}{{\mathbb{Z}_{\geq0}}}
\newcommand{\naturalzero}{\mathbb{N}_0}
\renewcommand{\naturalzero}{\integernonnegative}
\newcommand{\real}{{\mathbb{R}}}
\newcommand{\subscr}[2]{{#1}_{\textup{#2}}}
\newcommand{\union}{\cup}
\newcommand{\ceil}[1]{\left\lceil #1\right\rceil}
\newcommand{\smallfrac}[2]{\frac{{\scriptstyle #1}}{{\scriptstyle #2}}}
\newcommand{\until}[1]{\{1,\dots,#1\}}
\newcommand{\diag}{\operatorname{diag}}
\newcommand{\CC}{\mathcal{C}}
\newcommand{\subj}{\text{subj to}}
\newcommand{\half}{\frac{1}{2}}
\newcommand{\Vc}{\mathcal{V}_{c}}
\newcommand{\Ec}{\mathcal{E}_{c}}
\newcommand{\Gc}{\mathcal{G}_{c}}
\newcommand{\outnbrs}{\mathcal{N}^O}
\newcommand{\innbrs}{\mathcal{N}^I}
\newcommand{\CS}{\mathrm{\CC}}
\newcommand{\rad}{\mathrm{r}}
\newcommand{\cent}{\mathrm{c}}
\newcommand{\one}{\textbf{1}}
\newcommand{\dcore}{\ceil{\frac{1}{\epsilon}}}
\newcommand{\coreset}{\texttt{coreset}}
\newcommand{\val}{\mathrm{r^2}}
\newtheorem{theorem}{Theorem}[section]
\newtheorem{lemma}[theorem]{Lemma}
\newtheorem{remark}[theorem]{Remark}
\newtheorem{assumption}[theorem]{Assumption}
\newcommand\oprocendsymbol{\hbox{$\square$}}
\newcommand\oprocend{\relax\ifmmode\else\unskip\hfill\fi\oprocendsymbol}
\title{A core-set approach for distributed quadratic programming\\ in big-data classification}
\author{Giuseppe Notarstefano \thanks{
    Giuseppe Notarstefano is with the Department of Engineering, Universit\`a
    del Salento, via Monteroni, 73100, Lecce, Italy,
    \texttt{giuseppe.notarstefano@unisalento.it.} This result is part of a
    project that has received funding from the European Research Council (ERC)
    under the European Union’s Horizon 2020 research and innovation programme
    (grant agreement No 638992 - OPT4SMART).   } 
  }
\begin{document}

\maketitle % typeset the title of the contribution

% \frenchspacing

\begin{abstract}
  A new challenge for learning algorithms in cyber-physical network systems is
  the distributed solution of big-data classification problems, i.e., problems
  in which both the number of training samples and their dimension is high.
  Motivated by several problem set-ups in Machine Learning, in this paper we
  consider a special class of quadratic optimization problems involving a
  ``large'' number of input data, whose dimension is ``big''.  To solve these
  quadratic optimization problems over peer-to-peer networks, we propose an
  asynchronous, distributed algorithm that scales with both the number and the
  dimension of the input data (training samples in the classification
  problem).  The proposed distributed optimization algorithm relies on the
  notion of ``core-set'' which is used in geometric optimization to approximate
  the value function associated to a given set of points with a smaller subset
  of points. By computing local core-sets on a smaller version of the global
  problem and exchanging them with neighbors, the nodes reach consensus on a set
  of active constraints representing an approximate solution for the global
  quadratic program.
\end{abstract}

\begin{keywords}
  Distributed optimization, Big-Data Optimization, Support Vector Machine (SVM),
  Machine Learning, Core Set, Asynchronous networks.
\end{keywords}

\section{Introduction}
\label{sec:introduction}

% Motivations
Several learning problems in modern cyber-physical network systems involve a
large number of very-high-dimensional input data. The related research areas go
under the names of \emph{big-data analytics} or \emph{big-data classification}.
%
%
% Literature 
%
From an optimization point of view, the problems arising in this area involve a
large number of constraints and/or local cost functions typically distributed
among computing nodes communicating asynchronously and unreliably. 
An additional challenge arising in big-data classification problems is that not
only the number of constraints and local cost functions is large, but also the
dimension of the decision variable is big and may depend on the number of nodes
in the network.

We organize the literature in two parts. First, we point out some recent works
focusing the attention on \emph{big-data optimization} problems, i.e., problems
in which all the data of the optimization problem are big and cannot be handled
using standard approaches from sequential or even parallel optimization.
The survey paper \cite{cevher2014convex} reviews recent advances in convex
optimization algorithms for big-data, which aim to reduce the computational,
storage, and communications bottlenecks. The role of parallel and distributed
computation frameworks is highlighted.  
In \cite{facchinei2015parallel} big-data, possibly non-convex, optimization
problems are approached by means of a decomposition framework based on
successive approximations of the cost function.
%
% Giannakis dictionary learning
In \cite{slavakis2014online} dictionary learning tasks motivate the development
of non-convex and non-smooth optimization algorithms in a big-data context. The
paper develops an online learning framework by jointly leveraging the stochastic
approximation paradigm with first-order acceleration schemes.

Second, we review distributed optimization algorithms applied to learning
problems and highlight their limitations when dealing with big-data problems.
An early reference on peer-to-peer training of Support Vector Machines is
\cite{lu2008distributed}. A distributed training mechanism is proposed in which
multiple servers compute the optimal solution by exchanging support vectors over
a fixed directed graph. The work is a first successful attempt to solve SVM
problems over networks. However, the local memory and computation at each node
does not scale with the problem and data sizes and the graph is time-invariant.
In \cite{forero2010consensus} a distributed Alternating Direction Method of
Multipliers (ADMM) is proposed to solve a linear SVM training problem, while in
\cite{lee2012drsvm} the same problem is solved by means of a random projected
gradient algorithm. 
Both the algorithms are proven to solve the centralized problem (i.e., all the
nodes reach a consensus on the global solution), but again show some
limitations: the graph topology must be (fixed, \cite{forero2010consensus}, and)
undirected, and the algorithms do not scale with the dimension of the training
vector space.  
In \cite{boyd2011distributed} a survey on ADMM algorithms applied to statistical
learning problems is given.
% Varagnolo
In \cite{varagnolo2013finding}
the problem of exchanging only those measurements that are most informative in a
network SVM problem is investigated. For separable problems an algorithm is
provided to determine if an element in the training set can become a support
vector.
The distributed optimization algorithm proposed in
\cite{notarstefano2011distributed} solves part of these problems: local memory
is scalable and communication can be directed and asynchronous. However, the
dimension of the training vectors is still an issue.

The core-set idea used in this paper was introduced in
\cite{badoiu2002approximate} as a building block for clustering, and refined in
\cite{badoiu2008optimal}. In \cite{tsang2005core} the approach was shown to be
relevant for several learning problems and the algorithm re-stated for such
scenarios. A multi-processor implementation of the core-set approach was
proposed in \cite{lodi2010single}. However, differently from our approach, that
algorithm: (i) is not completely distributed since it involves a coordinator, and
(ii) does not compute a global core-set, but a larger set approximating it.

The main contribution of this paper is twofold. First, we identify a distributed
big-data optimization framework appearing in modern classification problems
arising in cyber-physical network systems. In this framework the problem is
characterized by a large number of input data distributed among computing
processors. The key challenge is that the dimension of each input vector is
very-high, so that standard local updates in distributed optimization cannot be
used. For this big-data scenario, we identify a class of quadratic programs that
model several interesting classification problems as, e.g., training of support
vector machines.
Second, for this class of big-data quadratic optimization problems, we propose a
distributed algorithm that solves the problem up to an arbitrary $\epsilon$
tolerance and scales both with the number and the dimension of the input
vectors. The algorithm is based on the notion of core-set used in geometric
optimization to approximate the value function of a given set of points with a
smaller subset of points. From an optimization point of view, a subset of active
constraints is identified, whose number depends only on the tolerance
$\epsilon$. The resulting approximate solution is such that an
$\epsilon$-relaxation of the constraints guarantees no constraint violation.

% ORGANIZATION
The paper is organized as follows. In Section~\ref{sec:distrib_optim_framework}
we introduce the distributed optimization problem addressed in the paper and
describe the network model. Section~\ref{sec:distributed_SVM} motivates the
problem set-up by showing a class of learning problems that can be cast in this
set-up. In Section~\ref{sec:core-set consensus} the core-set consensus algorithm
is introduced and analyzed. Finally, in Section~\ref{sec:simulations} a
numerical example is given to show the algorithm correctness.

\section{Distributed quadratic programming framework}
\label{sec:distrib_optim_framework}
In this section we introduce the problem set-up considered in the paper. 
We recall that we will deal with optimization problems in which both the number
of constraints and decision variables are ``big''.

We consider a set of processors $\{1,\ldots,N\}$, each equipped with
communication and computation capabilities.  Each processor $i$ has knowledge of
a vector $s_i\in\real^d$ and needs to cooperatively solve the quadratic program 
\begin{align}
  \notag  \min_{z\in\real^d, r\in\real} &\;\; r^2\\ 
 \subj        &\;\; \| z - s_i\|^2  \leq r^2, \quad i\in\until{N}.  
\label{eq:QP_probl_primal}
\end{align}
The above quadratic program is known in geometric optimization as \emph{minimum
  enclosing ball} problem, since it computes the center of the ball with minimum
radius enclosing the set of points $s_1, \ldots, s_N$.

By applying standard duality arguments, it can be shown that solving
\eqref{eq:QP_probl_primal} is equivalent to solving its dual
\begin{align}
\max_{x_1,\ldots,x_N} &\; \sum_{i=1}^N s_i^Ts_i x_i - \sum_{i=1}^N\sum_{j=1}^N s_i^Ts_j x_i x_j \nonumber\\
\subj &\;  \sum_{i=1}^N x_i = 1 \nonumber\\
         &\;\;   x_i \geq 0 \quad i\in\until{N},
\label{eq:QP_probl}
\end{align}
with $x_i\in\real$, $i\in\until{N}$. The problem can be written in a more
compact form as
\begin{align}
\max_{x\in\real^N} &\; \diag(S^TS) x  - x^T S^T S x \nonumber\\
\subj &\;\;  \one^T x = 1 \nonumber\\
         &\;\;   x \geq 0
\label{eq:QP_probl_vec}
\end{align}
where $S = [ s_1 \; \ldots \; s_N] \in\real^{d\times N}$, $\diag(S^TS)$ is the
vector with elements $s_i^Ts_i$, $i\in\until{N}$, $\one = [1
\; \ldots \; 1]^T \in\real^N$ and $x \geq 0$ is meant component-wise.

We will show in the next sections that this class of quadratic programs arises in
many important big-data classification problems. 

Each node has computation capabilities meaning that it can run a routine to
solve a local optimization problem. Since the dimension $d$ can be big, the
distributed optimization algorithm to solve problem \eqref{eq:QP_probl_primal}
needs to be designed so that the local routine at each node scales ``nicely''
with $d$.

The communication among the processors is modeled by a time-varying, directed
graph (digraph) $\Gc(t)=(\Vc,\Ec(t))$, where $t \in \naturalzero$ represents a
slotted universal time, the node set $\Vc = \{1,\ldots,N\}$ is the set of
processor identifiers, and the edge set $\Ec(t) \subset \{1,\ldots,N\}^{2}$
characterizes the communication among the processors.
Specifically, at time $t$ there is an edge from node $i$ to node $j$ if and only
if processor $i$ transmits information to processor $j$ at time $t$.
The time-varying set of outgoing (incoming) \emph{neighbors} of node $i$ at time
$t$, i.e., the set of nodes to (from) which there are edges from (to) $i$ at
time $t$, is denoted by $\outnbrs_i(t)$ ($\innbrs_i(t)$).
A static digraph is said to be \emph{strongly connected} if for every pair of
nodes $(i,j)$ there exists a path of directed edges that goes from $i$ to $j$.
For the time-varying communication graph we rely on the concept of a jointly
strongly connected graph.

\begin{assumption}[Joint Strong Connectivity] \label{ass:PeriodicConnectivity}
  For every time instant $t \in \mathbb{N}$, the union digraph
  $\Gc^{\infty}(t):=\cup_{\tau = t}^{\infty} \Gc(\tau)$ is strongly
  connected. \oprocend 
\end{assumption}

It is worth noting that joint strong connectivity of the directed communication
graph is a fairly weak assumption (it just requires persistent spreading of
information) for solving a distributed optimization problem, and naturally embeds
an asynchronous scenario.

We want to stress once more that in our paper all the nodes are peers, i.e.,
they run the same local instance of the distributed algorithm, and no node can
take any special role. Consistently, we allow nodes to be asynchronous, i.e.,
nodes can perform the same computation at different speed, and communication can
be unreliable and happen without a common clock (the time $t$ is a universal
time that does not need to be known by the nodes).

\section{Distributed big-data classification}
\label{sec:distributed_SVM}
In this section we present a distributed set up for some fundamental
classification problems and show, following \cite{tsang2005core}, how they can
be cast into the distributed quadratic programming framework introduced in the
previous section.

We consider classification problems to be solved in a distributed way by a
network of processors following the model in
Section~\ref{sec:distrib_optim_framework}. Each node in the network is assigned
a subset of input vectors and the goal for the processors is to cooperatively
agree on the optimal classifier without the help of any central coordinator.

\subsection{Training of Support Vector Machines (SVMs)}
\label{subsec:SVM model}
Informally, the SVM training problem can be summarized as follows. Given a set
of positively and negatively labeled points in a $k$-dimensional space, find a
hyperplane separating ``positive'' and ``negative'' points with the maximal
separation from all the data points. 
The labeled points are commonly called \emph{examples} or
\emph{training vectors}.

Linear separability of the training vectors is usually a strong assumption. In
many important concrete scenarios the training data cannot be separated by
simply using a linear function (a hyperplane). To handle the nonlinear
separability, nonlinear kernel functions are used to map the training samples
into a feature space in which the resulting features can be linearly separated.
That is, given a set of points $p_1, \ldots,p_m \in \real^k$ in the \emph{input
  space} they are mapped into a \emph{feature space} through a function
$p_i\mapsto \varphi(p_i) \in\real^d$. The key aspect in SVM is that $\varphi$
does not need to be known, but all the computations can be done through a so
called \emph{Kernel function} $K$ satisfying $K(p_i,p_j) :=
\varphi(p_i)^T\varphi(p_j)$.

\begin{remark}
  It is worth noting that the dimension of the feature space can be much higher
  than the one of the input space, even infinite (e.g., Gaussian
  kernels). \oprocend
\end{remark}

Following \cite{keerthi2000fast} and \cite{tsang2005core} we will adopt the
following common assumption in SVM. For any $p_i$ in the input space
\begin{align}
  \label{eq:const_kernel}
  K(p_i,p_i) = c, 
\end{align}
with $c$ independent of $i$.
This condition is satisfied by the most common kernel functions used in
SVM as, e.g., the isotropic kernel (e.g., Gaussian kernel), the dot-product
kernel with normalized inputs or any normalized kernel.

% Formally, we follow the formulation in \cite{CC-VV:95}. 
For fixed $d\in\natural$, let $\varphi(p_i) \in\real^d$, $i\in\until{N}$, be a
set of $N\!\in\!\natural$ feature-points with associated label $\ell_i \in
\{-1,+1\}$. The training vectors are said to be linearly separable if there
exist $w\in \real^d$ and $b\in\real$ such that $\ell_i (w^T \varphi(p_i) + b) \geq 1$ for
all $i\in\until{N}$. The \emph{hard-margin SVM training} problem consists of finding the
optimal hyperplane $w^T_o x + b_o = 0$, $x\in\real^d$, ($w_o$ is a vector
orthogonal to the hyperplane and $b_o$ is a bias) that linearly separates the
training vectors with maximal margin, that is, such that the distance
\[
\rho(w,b) = \min_{\varphi(p_i) | \ell_i=1} \frac{w^T \varphi(p_i)}{|w|} - \max_{\varphi(p_i) | \ell_i=-1}
\frac{w^T \varphi(p_i)}{|w|}
\]
is maximized.  Combining the above equations it follows easily that
$\rho(w_o,b_o) = 2/|w_o|$. Thus the SVM training problem may be written as a
quadratic program
\begin{equation}
  \begin{split}
    \min_{b,w} & \enspace {\smallfrac{1}{2}} \|w\|^2 \\
    \subj & \enspace \ell_i (w^T \varphi(p_i) + b) \geq 1 \qquad i\in\until{N}.
  \end{split}
  \label{eq:SVM_probl}
\end{equation}

In most concrete applications the training data cannot be separated without
outliers (or training errors).
A convex program that approximates the above problem was introduced in
\cite{CC-VV:95}. The idea is to introduce positive slack variables in order to
relax the constraints and add an additional penalty in the cost function to
weight them. 
The resulting classification problems are known as \emph{soft marging problems}
and the solution is called \emph{soft margin hyperplane}.

Next, we will concentrate on a widely used soft-margin problem, the
\emph{$2$-norm problem}, which adopts a quadratic penalty function. 
Following \cite{tsang2005core}, 
we will show that its dual version is a quadratic program with the structure of
\eqref{eq:QP_probl}.  The $2$-norm optimization problem turns out to be
\begin{equation}
  \begin{split}
    \min_{w,b, \rho, \xi_1,\ldots,\xi_N} & \;\; \smallfrac{1}{2} \|w\|^2
    +\smallfrac{1}{2} b^2 - \rho + \smallfrac{C}{2} \, \sum_{i=1}^N \xi_i^2 \\
    \subj &  \;\; \ell_i (w^T \varphi(p_i) + b) \geq \rho-\xi_i \qquad i\in\until{N}.\\
  \end{split}
  \label{eq:2norm_SVM_probl}
\end{equation}
Solving problem \eqref{eq:2norm_SVM_probl} is equivalent to solving the dual problem
\begin{align}
   \notag  \max_{x_1,\ldots,x_N} &  - \half
    \sum_{i=1}^N \sum_{j=1}^N x_i
    x_j \!\left(\ell_i \ell_j \varphi(p_i)^T \varphi(p_j) + \ell_i \ell_j +
      \frac{\delta_{ij}}{C}\right)\\ 
\notag    \subj  & \; \sum_{i=1}^N x_i = 1\\
    & \; x_i \geq 0 \qquad i \in \until{N},
  \label{eq:2norm_SVM_probl_dual}
\end{align}
where $\delta_{ij} \!\!=\!\! 1$ if $i\!\!=\!\!j$ and $\delta_{ij} \!\!=\! 0$ otherwise. 
\begin{remark}[Support vectors]
  The vector $w_o$ defining the optimal hyperplane can be written as linear
  combination of training vectors, $w_o = \sum_{i=1}^{N} \ell_i x_i \varphi(p_i)$,
  where $x_i\geq 0$ and $x_i>0$ only for vectors satisfying $\ell_i(w^T
  \varphi(p_i) + b)=\rho-\xi_i$. These vectors are called \emph{support vectors}.
  Support vectors are basically active constraints of the quadratic program. \oprocend
\end{remark}

Now, we can notice that defining $\tilde{K}(p_i,p_j) = \left(\ell_i \ell_j
  \varphi(p_i)^T \varphi(p_j) + \ell_i \ell_j + \frac{\delta_{ij}}{C}\right)$,
it holds 
\[
\tilde{K}(p_i,p_i) = c + 1 + \frac{1}{C},
\]
so that the constant term
$\smallfrac{1}{2}\sum_{i=1}^N x_i \tilde{K}(p_i,p_i) =
\smallfrac{1}{2}\sum_{i=1}^N x_i \left(\ell_i \ell_i \varphi(p_i)^T \varphi(p_i)
  + \ell_i \ell_i + \frac{\delta_{ii}}{C}\right)$,
can be added to the cost function. Thus, posing
\[
\widetilde{\varphi}(p_i) =
\begin{bmatrix}
\ell_i  \varphi(p_i)\\
\ell_i\\
\frac{1}{\sqrt{C}} e_i
\end{bmatrix}, 
\]
with $e_i$ the $i$th canonical vector (e.g., $e_1 = [1\; 0\; \ldots 0]^T$),
problem \eqref{eq:2norm_SVM_probl_dual} can be equivalently rewritten as 
\begin{align*}
\max_{x_1,\ldots,x_N} &\; \sum_{i=1}^N \tilde{\varphi}(p_i)^T
                        \tilde{\varphi}(p_i) x_i - \sum_{i=1}^N\sum_{j=1}^N
                        \tilde{\varphi}(p_i) \tilde{\varphi}(p_j) x_i x_j
                        \notag\\ 
\subj &\;  \sum_{i=1}^N x_i = 1 \notag\\
         &\;\;   x_i \geq 0 \quad i\in\until{N},
\end{align*}
which has exactly the same structure as problem~\eqref{eq:QP_probl}.

It is worth noting that even if the dimension $d$ of the training samples in the
feature space ($\varphi(p_i)\in\real^d$) is small compared to the number of samples
$N$ (so that in problem \eqref{eq:SVM_probl} the dimension of the decision
variable is much smaller than the number of constraints), in the ``augmented''
soft-margin problem \eqref{eq:QP_probl} we have $\widetilde{\varphi}(p_i)\in\real^{d+1+N}$. Thus, in
the primal problem \eqref{eq:QP_probl_primal} the dimension of the decision
variable is of the same order as the number of constraints.

\subsection{Unsupervised classification and clustering}
\label{subsec:unsupervised}
Next, we recall from \cite{tsang2005core} that also some unsupervised soft
margin classification problems can be cast into the same problem set-up of the
paper. 

First, from \cite{tsang2005core} and references therein, it can be shown that
problem \eqref{eq:QP_probl_primal} is equivalent to the hard-margin Support
Vector Data Description (SVDD) problem. Indeed, given a kernel function $K$ and
feature map $\varphi$, the hard-margin SVDD primal problem is
\begin{align}
  \notag  \min_{z\in\real^d, r\in\real} &\;\; r^2\\ 
 \subj        &\;\; \| z - \varphi(p_i)\|^2  \leq r^2, \quad i\in\until{N}.  
\label{eq:SVDD_probl_primal}
\end{align}
In other words, problem \eqref{eq:SVDD_probl_primal} is simply problem
\eqref{eq:QP_probl_primal} in the feature space. 

Another unsupervised learning problem that can be cast into the problem set-up
\eqref{eq:QP_probl} is the so called \emph{one-class L2 SVM},
\cite{tsang2005core}. Given a set of unlabeled input vectors the goal is to
separate outliers from normal data. From an optimization point of view, the
problem can be written as problem \eqref{eq:2norm_SVM_probl}, but with $b=0$ and
$\ell_i = 1$ for all $i\in\until{N}$.  Thus, using the same arguments as in the
previous subsection, the problem can be rewritten in the form
\eqref{eq:QP_probl}.

To conclude this motivating section, we recall from \cite{badoiu2002approximate}
that algorithms solving problem \eqref{eq:QP_probl_primal} are important
building blocks for clustering problems.

\section{Core-set consensus algorithm}
\label{sec:core-set consensus}
In this section we introduce the core-set consensus algorithm to solve
problem~\eqref{eq:QP_probl_primal} (or equivalently its dual
\eqref{eq:QP_probl}) in a distributed way. We start by introducing the notion of
core-set borrowed from geometric optimization and a routine from
\cite{badoiu2008optimal} that is proven to compute an efficient core-set for
\eqref{eq:QP_probl_primal}, which in geometric optimization is known as
\emph{minimum enclosing ball problem}.

\subsection{Core sets: definition and preliminaries}
\label{subsec:core-sets prelim}
In the following we will a little abuse notation by denoting with
$G\in\real^{d\times m}$ both the $d\times m$ matrix and the set of $m$ vectors
(or points) of dimension $d$.
Let $S \in \real^{d\times N}$ be a matrix of ``points'' $s_i\in\real^d$,
$i\in\until{N}$, (i.e., a matrix in which each column represents a vector in
$\real^d$) with $\cent(S)$ and $\rad(S)$ respectively the center and radius of
the minimum enclosing ball containing the points of $S$. We say that $\CS
\subset S$ is an \emph{$\epsilon$-core-set} for the Minimum Enclosing Ball (MEB)
problem, \eqref{eq:QP_probl_primal}, if all the points of $S$ are at distance at
most $(1+\epsilon) \rad(S)$ from the center $\cent(\CS)$ of the minimum
enclosing ball containing $\CS$. 
Note that $\rad(S)^2 = r_*^2$, with $r_*$ being the optimal value of
\eqref{eq:QP_probl_primal}.

Next, we introduce the algorithm in \cite{badoiu2008optimal} that is proven to
compute a core-set of dimension $\dcore$ for the minimum enclosing ball problem
\eqref{eq:QP_probl_primal}. 

Given a set of points $P$, the algorithm can be initialized by choosing any
subset $\CS \subset P$ of $\dcore$ points. Then the algorithm evolves
as follows:
\begin{itemize}
  \item select a point $a$ of $P$ farthest from the center of the minimum
    enclosing ball of $\CS$;
    \item let $\CS_a = \CS \union \{a\}$;
    \item remove a point $b \in \CS_a$ so that the minimum enclosing ball of the
      set $\CS_a \setminus \{b\}$ is the one with largest radius;
    \item if the new radius is equal to the radius of minimum enclosing ball of
      $\CS$, then return $\CS$. Otherwise set $\CS = \CS_a \setminus \{b\}$ and
      repeat the procedure.
\end{itemize}

More formally, the routing is described in the following table. As before, we
let $\cent(P)$ and $\rad(P)$ be respectively the center and radius of the
minimum enclosing ball containing all the points in a set of points $P$.

\begin{center}
\begin{minipage}[c]{.9\textwidth}
\textbf{function} $\coreset(P, \CS)$
\begin{algorithmic}[1]
\STATE $a = \argmax_{p\in P} \| p - \cent(\CS) \| $
\STATE $\CS_a = \CS \union \{a\}$
\STATE $b = \argmax_{p \in \CS_a} \rad(\CS_a \setminus \{p\})$
\STATE \textbf{if} $\rad(\CS_a\setminus \{b\}) > \rad(\CS)$ 
\STATE \quad $\CS = \CS_a \setminus \{b\}$ and go to step 1
\STATE \textbf{else}
\STATE \quad \textbf{return} $\CS$
\STATE \textbf{end if}
\end{algorithmic}
\end{minipage}
\end{center}

It is worth pointing out once more that if $P=S$, with $S$ the one in
\eqref{eq:QP_probl_vec}, then the $\coreset$ algorithm finds an $\epsilon$-core-set
for \eqref{eq:QP_probl_vec} (or equivalently \eqref{eq:QP_probl_primal}).

The $\coreset$ algorithm will be the local routine implemented in the distributed
optimization algorithm we propose in this paper. That is, each node will use the
algorithm to solve a (smaller) local version of the main problem.   

Next we provide a lemma that states the results in \cite{badoiu2008optimal} by
formally itemizing the properties of the algorithm that we will need in our
distributed optimization algorithm.

\begin{lemma}[\cite{badoiu2008optimal}]
\label{lem:Badoiu}
  Let $P\subset\real^d$ be any point set in $\real^d$. Then 
  \begin{enumerate}
  \item $P$ has an $\epsilon$-core-set of size at most $\dcore$;
  \item the $\coreset$ algorithm computes an $\epsilon$-core-set for $P$ in a finite
    number of iterations;
  \item for any $G\subset P$ the radius of $\coreset(G, \CS)$ is larger than or equal to the radius
    of $\CS$.
  \end{enumerate}
\end{lemma}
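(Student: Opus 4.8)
The plan is to split the three claims into a combinatorial layer, which follows directly from the bookkeeping of the $\coreset$ routine, and a geometric layer, which is the genuine content of \cite{badoiu2008optimal} and which I would invoke rather than rebuild. Claim (iii) and the finite-termination half of claim (ii) live in the first layer; claim (i) and the $\epsilon$-core-set guarantee delivered on termination live in the second.

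I would dispatch (iii) and termination together, since both rest on one monotonicity remark. At every iteration $\CS\subset\CS_a=\CS\union\{a\}$, so deleting the freshly added point recovers $\CS$; hence the maximizer $b$ in step 3 obeys $\rad(\CS_a\setminus\{b\})\geq\rad(\CS_a\setminus\{a\})=\rad(\CS)$, and the candidate radius is never smaller than the current one. The loop therefore advances (step 5) only when the radius strictly increases and returns $\CS$ otherwise, so $\rad(\CS)$ is nondecreasing over the entire execution; applied to an arbitrary input this is exactly $\rad(\coreset(G,\CS))\geq\rad(\CS)$, giving (iii). For finiteness I would note that the cardinality $\dcore$ of $\CS$ is invariant---a non-terminating iteration inserts one point and deletes one---that $P$ has only finitely many subsets of size $\dcore$, and that the minimum-enclosing-ball radius is a function of the subset. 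Since the radius increases strictly on each non-terminating step, no subset can recur, so the routine halts after finitely many iterations.

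It remains to certify that the set returned on termination is an $\epsilon$-core-set and that its size $\dcore$ suffices, which are claim (i) and the quality half of (ii). Here I would lean on the geometric estimate at the heart of \cite{badoiu2008optimal,badoiu2002approximate}: the center of a minimum enclosing ball lies in the convex hull of the points it touches, so the center cannot be shifted without receding from some boundary point, and consequently a point lying at distance $(1+\epsilon)\rad(P)$ or more from $\cent(\CS)$ forces $\rad(\CS\union\{a\})$ to exceed $\rad(\CS)$ by a quantifiable amount. Iterating this bound shows $\rad(\CS)$ climbs geometrically toward the optimum $\rad(P)$, and counting how many strict increases this permits caps the number of swaps---and hence the needed size---at $\dcore$. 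Dually, when the stopping test $\rad(\CS_a\setminus\{b\})=\rad(\CS)$ fires, no swap can raise the radius, which by the same estimate forces the farthest point $a=\argmax_{p\in P}\|p-\cent(\CS)\|$ to satisfy $\|a-\cent(\CS)\|\leq(1+\epsilon)\rad(P)$; since $a$ is the farthest point, every point of $P$ then lies within $(1+\epsilon)\rad(P)$ of $\cent(\CS)$, which is precisely the $\epsilon$-core-set property.

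The main obstacle is this last, geometric, step: turning the qualitative ``center stays in the convex hull of contact points'' into the sharp quantitative radius-increase inequality that simultaneously yields the optimal size bound $\dcore$ and the $(1+\epsilon)$ distance guarantee at termination. That potential/convergence analysis is the technical core of \cite{badoiu2008optimal}, so in restating the lemma I would cite it for this estimate and supply only the combinatorial wrapper above.
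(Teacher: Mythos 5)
Your proposal is correct and follows essentially the same route as the paper, whose proof simply cites \cite[Theorem 3.5]{badoiu2008optimal} for (i) and (ii) and observes that (iii) is immediate from the update test in step~4 of the $\coreset$ routine. The only difference is that you additionally supply a self-contained combinatorial termination argument (strictly increasing radius over the finitely many size-$\dcore$ subsets of $P$) where the paper defers even that part of (ii) to the citation; this is a harmless, and arguably clarifying, elaboration.
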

\begin{proof}
  Statements (i) and (ii) are proven in \cite[Theorem 3.5]{badoiu2008optimal},
  while (iii) follows immediately by step~4 of the algorithm.
\end{proof}

\subsection{Core-set consensus algorithm description}
\label{subsec:algorithm_description}
Let $S$ be the matrix characterizing problem \eqref{eq:QP_probl_vec} or
consistently the set of vectors $s_i\in\real^d$, $i\in\until{N}$.
As stated in Section~\ref{sec:distrib_optim_framework}, each node is assigned
one input vector. This assumption is just for clarity of presentation and can be
easily removed. In fact, the algorithm can be run even if each node is assigned
more than one vector. For this reason we denote $S_i$ the set of initial
vectors, so that under the above assumption we have $S_i = \{s_i\}$.

An informal description of the \emph{core-set consensus} distributed algorithm
is the following.
Each node stores a candidate core-set $\CS_i$, i.e., a set of $\dcore$ vectors
that represent node-$i$'s current estimate for the core-set of $S$. At each
communication round each node receives the candidate core sets (sets of $\dcore$
vectors) from its in-neighbors and initializes its local routine to the core-set
with highest value. Let $\subscr{S}{tmp} = S_i \union \CS_i \union \big(
\union_{j\in \innbrs_i(t)} \CS_j \big)$ be the set of vectors from all
neighboring core-sets plus the initial vectors assigned to node $i$. The local
routine at each node finds a core set (of $\dcore$ vectors) of
$\subscr{S}{tmp}$, and updates the candidate core set with the returned value.

A pseudo-code of the algorithm is given in the following table.
We assume each node can run two routines, namely $\subscr{C}{out} =
\coreset(G,\subscr{C}{in})$ and $\subscr{\val}{out} = \val(\subscr{C}{in})$ returning
respectively the core-set of a given set of vectors $G$ (the routine is
initialized with $\subscr{\CS}{in}$) and the value of a given core set, i.e.,
the optimal value of problem \eqref{eq:QP_probl_vec} with matrix $S=G$ (squared
radius of the minimum enclosing ball).

\begin{center}
\begin{minipage}[c]{0.85\linewidth}
  \begin{algorithm}[H]
    \caption{Core-set consensus}
    \label{alg:core-set_consensus}
    \begin{algorithmic}
      \STATE {\bf Processor state:} $\CS_i \in \real^{d\times\dcore}$
      \STATE {\bf Initialization:} $S_i = \{s_i\}$, $\CS_i = \{s_i, \ldots, s_i\}$  %, $\val_i = \val(\CS_i)$
      \STATE {\bf Message to out-neighbors:} $\CS_i$
      \STATE {\bf Local routine:}
      \begin{align*}
        \subscr{S}{tmp} &:= S_i \union \CS_i \union \big( \union_{j\in
          \innbrs_i(t)} \CS_j \big)\\
        \CS_{i0} &:= \argmax_{j\in\innbrs_i(t)\union \{i\}} \val(\CS_j)\\
        \CS_i &:= \coreset( \subscr{S}{tmp}, \CS_{i0})
      \end{align*}
      \STATE \textbf{return} $\CS_i$
    \end{algorithmic}
  \end{algorithm}
\end{minipage}
\end{center}

\medskip
\begin{remark}
  The algorithm works also if a larger set of vectors is assigned to each
  node. Only the initialization needs to be changed. If a node is assigned more
  than $\dcore$ vectors, it will initialize $\CS_i$ with a random set of
  $\dcore$ vectors. \oprocend
\end{remark}

\begin{remark}
  It is worth noting that the nodes need to know the common tolerance $\epsilon$
  (and thus the core-set dimension $\dcore$) to run the algorithm. \oprocend 
\end{remark}

To analyze the algorithm, we associate a universal, discrete time
$t\in\naturalzero$ to each step of the distributed algorithm evolution, i.e., to
each computation and communication round. This time $t$ is the one defining the
time varying nature of the communication graph and, thus, the same used in
Assumption~\ref{ass:PeriodicConnectivity}.

\subsection{Algorithm analysis}
\label{subsec:algorithm_analysis}
We are now ready to analyze the convergence properties of the algorithm.

\begin{assumption}[Non-degeneracy]
  \label{ass:non_degenerate}
  Given $S$ in \eqref{eq:QP_probl_vec}, for any $C_1, C_2\subset S$ with
  $C_1,C_2\in\real^{d\times\dcore}$, then $\rad(C_1) \neq \rad(C_2)$. \oprocend
\end{assumption}

The above assumption can be removed by using a total ordering for the choice of
$\CS_{i0}$ in Algorithm~\ref{alg:core-set_consensus}. For example, if two
candidate sets $\CS_{i0}^1$ and $\CS_{i0}^2$ have $\rad(\CS_{i0}^1) =
\rad(\CS_{i0}^2)$, then one of the two could be uniquely chosen by using a
lexicographic ordering on the vectors.

\begin{theorem}
  Consider a network of processors with set of identifiers $\Vc = \until{N}$ and
  communication graph $\Gc(t)=(\Vc,\Ec(t))$, $t \in \mathbb{N}$, satisfying
  Assumption~\ref{ass:PeriodicConnectivity}. Suppose
  problem~\eqref{eq:QP_probl_primal} satisfies
  Assumption~\ref{ass:non_degenerate} and has a minimum value $r_*^2$.
  Then the core-set consensus algorithm (Algorithm~\ref{alg:core-set_consensus})
  computes an $\epsilon$-core-set for problem~\eqref{eq:QP_probl_primal} in a
  finite-number of communication rounds. That is, there exists $T>0$ such that
\begin{enumerate}
\item $\CS_i(t) = \bar{\CS}$ for all
  $i\in\until{N}$ and for all $t\geq T$;
\item $\|\cent(\bar{\CS}) - s_i\|^2 \leq (1+\epsilon)^2 r_*^2$ for all
  $i\in\until{N}$.
\end{enumerate}
\end{theorem}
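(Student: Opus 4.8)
The plan is to combine a monotonicity argument for the radii of the candidate core-sets with a propagation argument driven by joint strong connectivity, and then to read statement (ii) directly off the fixed point of the local routine. Throughout I recall that $\val(\CS)=\rad(\CS)^2$, so maximizing $\val$ is the same as maximizing the radius.

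First I would establish two invariants by induction on $t$. \emph{(a) Every candidate core-set is a subset of $S$:} this holds at initialization, since $\CS_i=\{s_i,\dots,s_i\}$; it is preserved because $\subscr{S}{tmp}$ is a union of such subsets together with $S_i=\{s_i\}$, and because $\coreset$ returns a subset of its input point set. Consequently $\rad(\CS_i(t))\le\rad(S)=r_*$ for all $i$ and $t$. \emph{(b) The radius of each node's candidate core-set is non-decreasing in $t$:} since $\CS_{i0}$ is chosen to maximize $\val(\cdot)$ over $\innbrs_i(t)\cup\{i\}$, we have $\rad(\CS_{i0})\ge\rad(\CS_i(t))$; by Lemma~\ref{lem:Badoiu}(iii) the routine satisfies $\rad(\coreset(\subscr{S}{tmp},\CS_{i0}))\ge\rad(\CS_{i0})$, and hence $\rad(\CS_i(t+1))\ge\rad(\CS_i(t))$.

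Next I would consider the network-wide maximum $\rho_{\max}(t)=\max_i\rad(\CS_i(t))$, which by (a)--(b) is non-decreasing and bounded above by $r_*$. Since there are finitely many subsets of $S$ of cardinality $\dcore$, the radii take finitely many values, so $\rho_{\max}(t)$ is eventually constant, equal to some $\rho_{\max}$ for all $t\ge t^*$. By Assumption~\ref{ass:non_degenerate} the radius is a faithful label: the unique subset attaining $\rho_{\max}$ is some fixed $\bar{\CS}$. Any node holding $\bar{\CS}$ at time $t^*$ keeps radius $\ge\rho_{\max}$ by monotonicity and $\le\rho_{\max}$ by definition of $t^*$, hence holds $\bar{\CS}$ for all $t\ge t^*$. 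The propagation step uses Assumption~\ref{ass:PeriodicConnectivity}: because $\Gc^\infty(\tau)$ is strongly connected for every $\tau$, one constructs, for any target node $j$, a time-respecting directed path from a node holding $\bar{\CS}$ to $j$ by repeatedly invoking connectivity of the union graph from the current time onward. Along each edge of this path the receiving node finds $\bar{\CS}$ among the incoming sets, selects it as $\CS_{i0}$ (it carries the maximal value $\rho_{\max}^2$), and the routine returns a set of radius in $[\rho_{\max},r_*]$; since nothing can exceed $\rho_{\max}$ after $t^*$, this radius is exactly $\rho_{\max}$, so the returned set is $\bar{\CS}$, which the node then retains. Walking along the path shows $j$ holds $\bar{\CS}$ after finitely many rounds, giving statement (i) with $T$ the time by which every node has been reached.

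Finally, for statement (ii) I would exploit that $\bar{\CS}$ is a genuine fixed point: for $t\ge T$ every node runs $\coreset(\subscr{S}{tmp}^{i},\bar{\CS})$ with $\subscr{S}{tmp}^{i}\supseteq\{s_i\}\cup\bar{\CS}$ and obtains $\bar{\CS}$ back. By Lemma~\ref{lem:Badoiu}(ii) the returned set is an $\epsilon$-core-set for its input point set $\subscr{S}{tmp}^{i}$, so every point of $\subscr{S}{tmp}^{i}$, in particular $s_i$, lies within $(1+\epsilon)\rad(\subscr{S}{tmp}^{i})$ of $\cent(\bar{\CS})$. Since $\subscr{S}{tmp}^{i}\subseteq S$ gives $\rad(\subscr{S}{tmp}^{i})\le\rad(S)=r_*$, this yields $\|\cent(\bar{\CS})-s_i\|^2\le(1+\epsilon)^2 r_*^2$, as claimed. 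I expect the main obstacle to be the propagation step, where some care is needed to turn ``eventual constancy of the maximal radius'' plus joint connectivity into consensus on the actual set; this is precisely where Assumption~\ref{ass:non_degenerate} is indispensable, as it lets the scalar radius stand in for the entire core-set and thereby converts a statement about radii into the required statement about the sets $\CS_i(t)$.
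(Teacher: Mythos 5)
Your proposal is correct and follows essentially the same three-step structure as the paper's proof: monotonicity of $\val(\CS_i(t))$ plus finiteness of candidate sets gives convergence, joint strong connectivity plus Assumption~\ref{ass:non_degenerate} forces consensus on a single set $\bar{\CS}$, and the fixed-point/core-set property of the local routine yields statement (ii). The only differences are cosmetic: you run the consensus step as a forward propagation of the maximal-radius set rather than the paper's contradiction argument, and you spell out more explicitly than the paper why $\rad(\subscr{S}{tmp}^i)\leq r_*$ turns the local core-set guarantee into the global bound of statement (ii).
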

\begin{proof}
  We prove the statement in three steps. First, we prove that each core set
  converges in a finite-number of communication rounds to a stationary set of
  vectors. Second, we prove that (due to Assumption~\ref{ass:non_degenerate})
  all the stationary core sets are equal. Third and finally, we prove that the
  common steady-state set is a core set for problem~\eqref{eq:QP_probl_primal}.

  To prove the first part, notice that by the choice of $\CS_{i0}$ in
  Algorithm~\ref{alg:core-set_consensus} and by Lemma~\ref{lem:Badoiu},
  $\val(\CS_i(t))$ is a monotone nondecreasing function for all $i\in\until{N}$
  along the algorithm evolution. 
  Thus, due to the finite possible values that $\CS_i$ can assume (it is a set
  of $\dcore$ vectors out of $N$ vectors), $\val(\CS_i(t))$ converges to a
  stationary value in finite time.

  To prove the second part, suppose that at some time $T>0$ all the
  $\val(\CS_i(t))$s have converged to a stationary value and that there exist at
  least two nodes $i$ and $j$ such that $\val(\CS_i(t)) >
  \val(\CS_j(t))$. Without loss of generality, from
  Assumption~\ref{ass:PeriodicConnectivity}, we can choose the two nodes so that
  $(i,j)\in\Ec(\bar{t})$, i.e., $(i,j)$ is an edge in $\Gc(\bar{t})$ for some
  time instant $\bar{t} > T$. But from Algorithm~\ref{alg:core-set_consensus} at
  time $\bar{t}$ node $j$ would choose $\CS_i$ to initialize its $\coreset$
  routine, thus leading to a contradiction. 
  From Assumption~\ref{ass:non_degenerate}, it follows $\CS_i = \bar{\CS}$ for
  all $i\in\until{N}$.

  Finally, we just need to prove that $\bar{\CS}$ is a core-set for $S$. But
  from the properties of the $\coreset$ algorithm, for each node
  $i\in\until{N}$, $\CS_i = \bar{\CS}$ is a core-set for the a set of points
  including $s_i$, so that $\bar{\CS}$ is a core set for $S = [s_1 \ldots s_N]$,
  thus concluding the proof.
\end{proof}

\begin{remark}[Core-sets and active constraints]
  A core-set $\CS$ is a set of ``active constraints'' in
  problem~\eqref{eq:QP_probl_primal} with a cost $\val(\CS)$ (i.e., $r =
  \rad(\CS)$). Clearly, some of the constraints will be
  violated for this value of $r$, but no one will be violated for $r = r_*
  (1+\epsilon)$, with $r_*$ being the optimal value of $r$. 
  An equivalent characterization for the core-set is that no constraint is
  violated if $\rad(\CS)$ is relaxed to $\rad(\CS)(1-\epsilon)$. This test is
  easier to run, since it does not involve the computation of the optimal
  value and will be used in the simulations. \oprocend
\end{remark}

\section{Simulations}
\label{sec:simulations}
In this section we provide a numerical example showing the effectiveness of the
proposed strategy. 

We consider a network with $N=100$ nodes communicating according to a directed,
time-varying graph obtained by extracting at each time-instant an
Erd\H{o}s-R\'enyi graph with parameter $0.01$. We choose a small value, so that
at a given instant the graph is disconnected with high probability, but the
graph turns out to be jointly connected. 
We solve a quadratic program, \eqref{eq:QP_probl_primal}, with $d=50$ and choose
a tolerance $\epsilon = 0.1$ so that the number of vectors in the core-set is
$\dcore = 10$. 

In Figure~\ref{fig:rr_transient} and Figure~\ref{fig:cc_transient} the evolution
of the squared-radius and center-norm of the core-sets at each node are
depicted. As expected from the theoretical analysis, the convergence of the
radius to the consensus value is monotone non-decreasing.

\begin{figure}[h]
\centering
\includegraphics[scale=0.5]{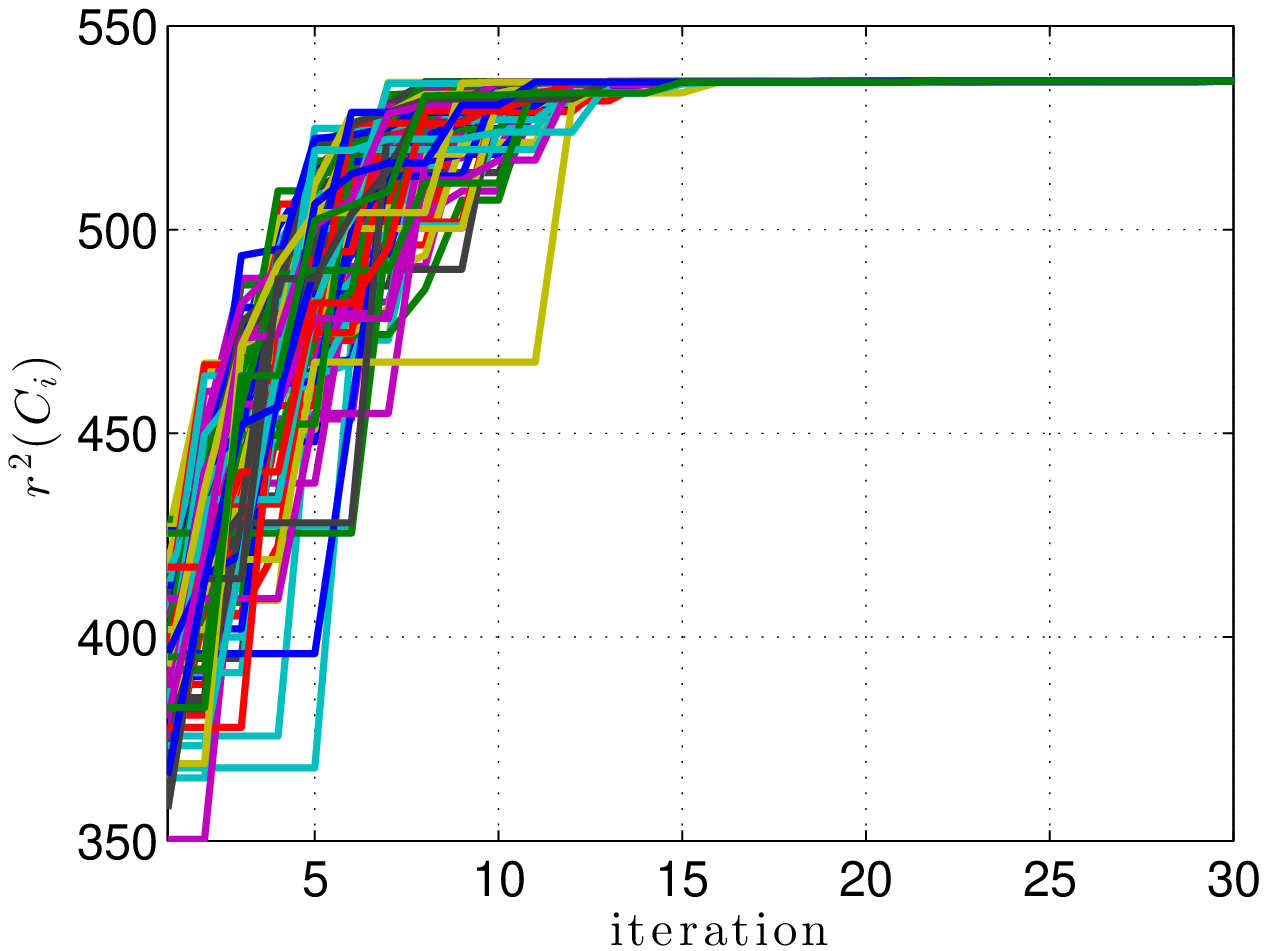}
\caption{Time evolution of $\rad^2(\CS_i(t))$, $i\in\until{N}$.}%
\label{fig:rr_transient}
\end{figure}
\begin{figure}[h]
\centering
\includegraphics[scale=0.5]{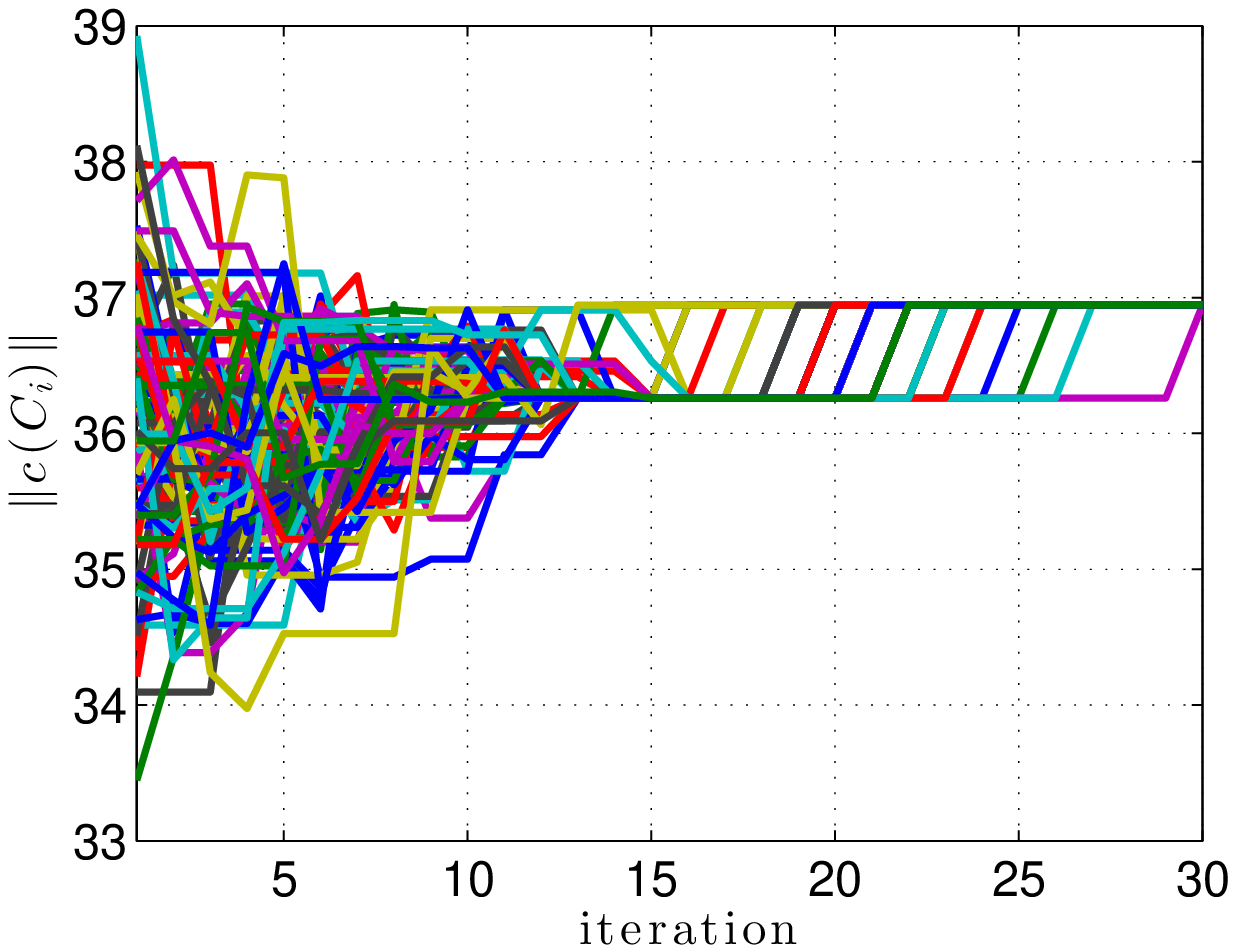}
\caption{Time evolution of $\|\cent(\CS_i(t))\|$, $i\in\until{N}$.}%
\label{fig:cc_transient}
\end{figure}

\section{Conclusions}
In this paper we have proposed a distributed algorithm to solve a special class
of quadratic programs that models several classification problems. The proposed
algorithm handles problems in which not only the number of input data is large,
but furthermore their dimension is big. The resulting learning area is known as
\emph{big-data classification}. We have proposed a distributed optimization
algorithm that computes an approximate solution of the global
problem. Specifically, for any chosen tolerance $\epsilon$, each local node
needs to store only $\dcore$ active constraints, which represent a solution for
the global quadratic program up to a relative tolerance $\epsilon$.
Future research developments include the extension of the algorithmic idea,
based on core-sets, to other big-data optimization problems.

\end{document}